\newtheorem{thm}{Theorem}
\newtheorem{prop}[thm]{Proposition}
\newtheorem{remarks}[thm]{Remark}
\newtheorem{definition}[thm]{Definition}
\newtheorem{exl}[thm]{Example}
\numberwithin{thm}{section}
\newcommand{\adj}{\leftrightarrow}
\newcommand{\adjeq}{\leftrightarroweq}
\def\Z{{\mathbb Z}}
\def\N{{\mathbb N}}
\def\R{{\mathbb R}}
\begin{document}

\title{Remarks on Fixed Point Assertions in Digital Topology, 5}

\author{Laurence Boxer
\thanks{Department of Computer and Information Sciences, Niagara University, NY 14109, USA;
and Department of Computer Science and Engineering, State University of New York at Buffalo \newline
email: boxer@niagara.edu \newline
Paper accepted for publication in {\em Applied General Topology}}
}

\date{ }
\maketitle

\begin{abstract}
As in~\cite{BxSt19,Bx19,Bx19-3,Bx20}, we 
discuss published assertions concerning fixed
points in ``digital metric spaces" - assertions
that are incorrect or incorrectly proven, 
or reduce to triviality. 

MSC: 54H25

Key words and phrases: digital topology, 
fixed point, metric space
\end{abstract}

\section{Introduction}
As stated in~\cite{Bx19}:
\begin{quote}
The topic of fixed points in digital topology has drawn
much attention in recent papers. The quality of
discussion among these papers is uneven; 
while some assertions have been correct and interesting, others have been incorrect, incorrectly proven, or reducible to triviality.
\end{quote}
Paraphrasing~\cite{Bx19} slightly: 
in~\cite{BxSt19,Bx19,Bx19-3,Bx20}, we have discussed many shortcomings in earlier papers and have offered
corrections and improvements. We continue this work in the current paper.

Authors of many weak papers concerning 
fixed points in digital topology
seek to obtain results in a ``digital metric space" (see section~\ref{DigMetSp} for its definition).
This seems to be a bad idea. We quote~\cite{Bx20}:
\begin{quote}
\begin{itemize}
    \item Nearly all correct nontrivial 
    published assertions concerning digital
    metric spaces use either the adjacency of the digital image or the metric, but not both.
    \item If $X$ is finite (as in a ``real 
    world" digital image) or the metric $d$ 
          is a common metric such as any $\ell_p$ metric, then 
          $(X,d)$ is uniformly discrete as a topological space, 
          hence not very interesting.
    \item Many of the published assertions concerning
          digital metric spaces mimic analogues for subsets 
          of Euclidean~$\R^n$. Often, the authors neglect 
          important differences between the topological 
          space $\R^n$ and digital images, resulting in 
          assertions that are incorrect, trivial, or trivial
          when restricted to conditions that many regard as 
          essential. E.g., in many cases, functions that
          satisfy fixed point assertions must be constant or fail to be digitally continuous~\cite{BxSt19,Bx19,Bx19-3}.
\end{itemize}
\end{quote}

Since the publication of~\cite{Bx20}, additional
papers concerning fixed points in digital metric spaces
have come to our attention.
This paper continues the work 
of~\cite{BxSt19,Bx19,Bx19-3,Bx20} in discussing 
shortcomings of published assertions concerning fixed points in digital metric spaces.

Many of the definitions and assertions we
discuss were written with typographical
and grammatical errors,
and mathematical flaws. We have quoted 
these by using images of the originals
so that the reader can see these errors as they appear
in their sources (we have
removed or replaced with a different style
labels in equations and inequalities
in the images to remove confusion with
labels in our text).

\section{Preliminaries}
Much of the material in this section is quoted or
paraphrased from~\cite{Bx20}.

We use $\N$ to represent the natural numbers,
$\Z$ to represent the integers, and $\R$ to represent the reals.

A {\em digital image} is a pair $(X,\kappa)$, where $X \subset \Z^n$ 
for some positive integer $n$, and $\kappa$ is an adjacency relation on $X$. 
Thus, a digital image is a graph.
In order to model the ``real world," we usually take $X$ to be finite,
although there are several papers that consider
infinite digital images. The points of $X$ may be 
thought of as the ``black points" or foreground of a 
binary, monochrome ``digital picture," and the 
points of $\Z^n \setminus X$ as the ``white points"
or background of the digital picture.

For this paper, we need not specify
the details of adjacencies or of
digitally continuous functions.

A {\em fixed point} of a function $f: X \to X$ 
is a point $x \in X$ such that $f(x) = x$. 

\subsection{Digital metric spaces}
\label{DigMetSp}
A {\em digital metric space}~\cite{EgeKaraca15} is a triple
$(X,d,\kappa)$, where $(X,\kappa)$ is a digital image and $d$ is a metric on $X$. The
metric is usually taken to be the Euclidean
metric or some other $\ell_p$ metric.
We are not convinced that the
digital metric space is a notion 
worth developing.
Typically,
assertions in the literature do not make 
use of both $d$ and $\kappa$,
so that ``digital metric space" seems 
an artificial notion. E.g., for a
discrete topological space $X$, all 
functions $f: X \to X$ are continuous,
although on digital images, many 
functions $g: X \to X$ are not digitally continuous (digital continuity is defined
in~\cite{Bx99}, generalizing an earlier 
definition~\cite{Rosenfeld}).

\section{Assertions for contractions in~\cite{J-R18}}
The paper~\cite{J-R18} claims fixed point
theorems for several types of digital 
contraction functions. Serious errors
in the paper are discussed below.

\subsection{Fixed point for Kannan contraction}
Figure~\ref{fig:J-R-KannanDef} shows the definition
appearing in~\cite{J-R18} of a
{\em Kannan digital contraction}.

\begin{figure}
    \centering
    \includegraphics[height=1.5in]{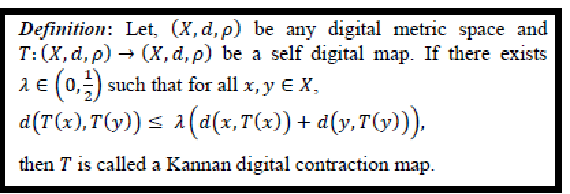}
    \caption{Definition of Kannan digital contraction in~\cite{J-R18}
          }
    \label{fig:J-R-KannanDef}
\end{figure}

\begin{figure}
    \centering
    \includegraphics[height=1in]{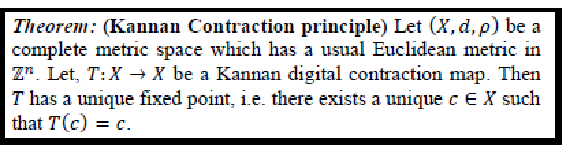}
    \caption{Fixed point assertion for Kannan digital contractions in~\cite{J-R18}
          }
    \label{fig:J-R-KannanAssert}
\end{figure}

Figure~\ref{fig:J-R-KannanAssert} shows a fixed point assertion
for Kannan digital contractions in~\cite{J-R18}. The ``proof"
of this assertion has errors discussed below.

\begin{itemize}
    \item In the fourth and fifth lines of the ``proof" is the claim that
\[ \lambda [d(x_n, x_{n-1}) + d(x_{n-1}, x_{n-2})] \le 
   2 \lambda d(x_n, x_{n-1}).
\]
Since $\lambda > 0$, this claim implies
\[ d(x_{n-1}, x_{n-2}) \le d(x_n, x_{n-1}),
\]
so if any $d(x_n, x_{n-1})$ is positive, the sequence
$\{x_n\}$ is not a Cauchy sequence, contrary to a claim that appears
later in the argument.
\item Towards the end of the existence argument, the authors claim
      that a Kannan digital contraction is
      digitally continuous. This assumption is contradicted
      by Example~4.1 of~\cite{BxSt19}.
\end{itemize}

In light of these errors, we must conclude that the assertion
of Figure~\ref{fig:J-R-KannanAssert} is unproven.

\subsection{Example of pp. 10769 - 10770}
This example is shown in
Figure~\ref{fig:J-R-exl-p10769}. One sees
easily the following.
\begin{itemize}
    \item $d(0,1)=d(0,2)=0$, so $d$, contrary
          to the claim, is not a metric.
    \item $T(1)=1/2 \not \in X$.
\end{itemize}

\begin{figure}
    \centering
    \includegraphics[height=1.5in]{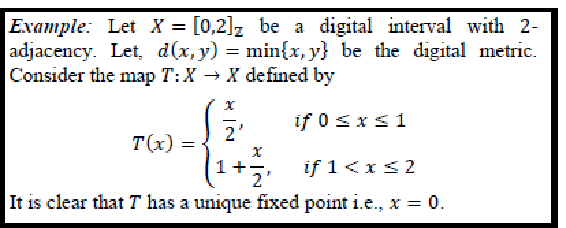}
    \caption{The example of~\cite{J-R18},
     pp. 10769 - 10770
          }
    \label{fig:J-R-exl-p10769}
\end{figure}

\subsection{Fixed point for generalization of
Kannan contraction}
Figure~\ref{fig:J-R-Kannan} shows an assertion
of a fixed point result on p. 10770
of~\cite{J-R18}.

\begin{figure}
    \centering
    \includegraphics[height=2.5in]{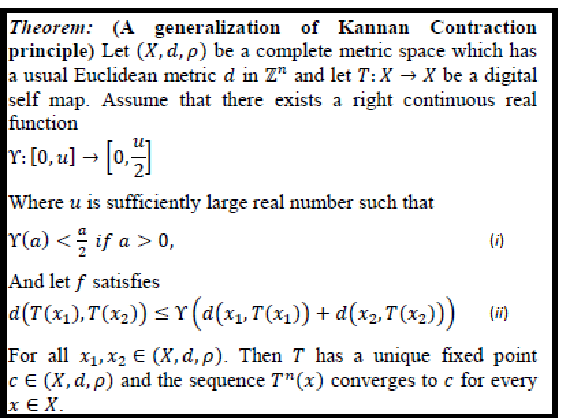}
    \caption{A ``theorem" of~\cite{J-R18},
     p. 10770
          }
    \label{fig:J-R-Kannan}
\end{figure}

Note ``And let $f$ satisfies" should be
``and let $T$ satisfy".

More importantly: 
In the argument offered as proof of this
assertion, the authors
let $x_0 \in X$, and, inductively,
\[ x_{n+1} = T(x_n),~~~~~a_{n+1} = d(x_n,x_{n+1}).
\]
They claim that by using the statements
marked (i) and (ii) in Figure~\ref{fig:J-R-Kannan}, it follows that
\[
 a_{n+1} = d(x_n,x_{n+1}) \le
   \Upsilon(d(x_n, x_{n-1}) + d(x_{n-1}, x_{n-2}))
\]
\begin{equation}
\label{incorrect}
   < 2d(x_n, x_{n-1}) = 2a_n,
\end{equation}
which, despite the authors' claim, does not
show that the sequence
$\{a_n\}$ is decreasing.
However, what correctly follows from the 
statements marked (i) and (ii) in Figure~\ref{fig:J-R-Kannan} is
\[
 a_{n+1} = d(x_n,x_{n+1}) =
 d(T(x_{n-1}),T(x_n)) \le
 \]
 \[
 \Upsilon(d(x_{n-1}, T(x_{n-1})) + d(x_n, T(x_n)))
= \Upsilon(d(x_{n-1}, x_n) + d(x_n, x_{n+1}))
\]
\begin{equation} 
\label{Y-ineq}
= \Upsilon(a_n + a_{n+1}) < \frac{a_n+a_{n+1}}{2}.
\end{equation}
From this we see that $a_{n+1} < a_n$, so
the sequence $\{a_n\}$ is decreasing and bounded below
by 0, hence tends to a limit $a \ge 0$.

The authors then claim that if $a > 0$ then
$a_{n+1} \le Y(a_n)$. However, what we showed
in~(\ref{Y-ineq}) does not support this
conclusion, which is not justified in any
obvious way. Since the authors wish to 
contradict the hypothesis that $a > 0$
in order to derive that the sequence
$\{x_n\}$ is a Cauchy sequence,
we must regard the assertion
shown in Figure~\ref{fig:J-R-Kannan}
as unproven.

\subsection{Fixed point for Zamfirescu
contraction}
\label{ZamfSec}
A {\em Zamfirescu digital contraction} is
defined in Figure~\ref{fig:MishraDef3.2}. This
notion is used in~\cite{J-R18,RanaGarg}, and will be
discussed in the current section and in
section~\ref{RanaGargSec}.

    \begin{figure}
    \centering
    \includegraphics[height=1in]{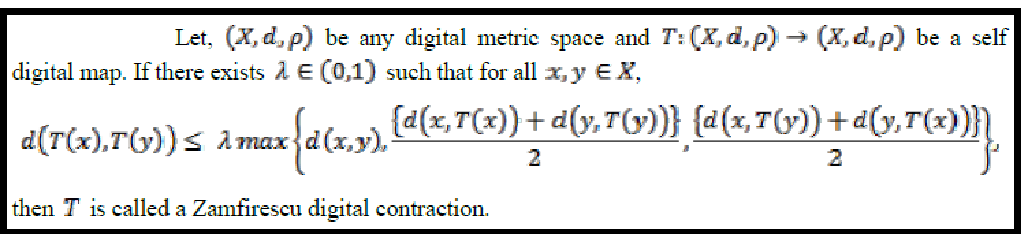}
    \caption{Definition~3.2 of~\cite{MishraEtAl},
    used in~\cite{J-R18,RanaGarg}.
          }
    \label{fig:MishraDef3.2}
\end{figure}

Figure~\ref{fig:J-R-Zamf} shows an
assertion found on p. 10770
of~\cite{J-R18}.

\begin{figure}
    \centering
    \includegraphics[height=1in]{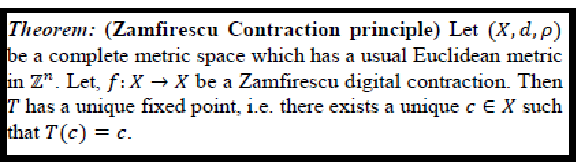}
    \caption{Another ``theorem" of~\cite{J-R18},
     p. 10770
          }
    \label{fig:J-R-Zamf}
\end{figure}

The argument offered as ``proof" of this 
assertion considers cases. For an arbitrary
$x_0 \in X$, a sequence is inductively
defined via $x_{n+1}=T(x_n)$. 
For convenience, let us define
\[ M(x,y) = 
\max \left \{ d(x,y), \frac{d(x,Tx) + d(y,Ty)}{2}, \frac{d(x,Ty) + d(y,Tx)}{2} \right \}.
\]
The argument considers several cases.
\begin{itemize}
    \item Case 1 says if
           $d(x_{n+1},x_n) = M(x_{n+1},x_n)$ then, by
           implied induction,
           \[ d(x_{n+1},x_n) \le
              \lambda^n d(x_1,x_0).
           \]
           But this argument is based
           on the unproven assumption
           that this is also the case
           for all indices $i < n$; i.e.,
           that $d(x_{i+1},x_i) = M(x_{i+1},x_i)$.
    \item Case 2 says if
\[ \frac{d(x_{n+1},Tx_{n+1}) + d(x_n,Tx_n)}{2} =
\]
\[ \max \left \{ \begin{array}{l}
d(x_{n+1},x_n), \frac{d(x_{n+1},Tx_{n+1}) + d(x_n,Tx_n)}{2}, \\ \\ \frac{d(x_{n+1},Tx_n) + d(x_n,Tx_{n_1})}{2}
 \end{array} \right \},
\]
then
\[ d(x_{n+1},x_n) = d(Tx_n, Tx_{n-1}) \le
   \lambda \frac{d(x_n,x_{n-1}) + d(x_{n-1},x_{n-2})}{2} \le
   \]
   \[
   \lambda d(x_n, x_{n-1}).
\]
But in order for the second inequality
in this chain to be true, we would need
$d(x_{n-1},x_{n-2}) \le d(x_n,x_{n-1})$,
and no reason is given to believe the
latter.
\item Case 3 says that if
       $\frac{d(x_{n+1},Tx_n) + d(x_n,Tx_{n+1})}{2} = M(x_{n+1}, x_n)$ then
       \[ d(x_{n+1},x_n) = d(Tx_n, Tx_{n-1}) \le
      \lambda \frac{d(x_n,x_{n-2}) + d(x_{n-1},x_{n-1})}{2}.
      \]
      The correct upper bound, according to the definition
      shown in Figure~\ref{fig:MishraDef3.2}, is
       \[ \lambda \frac{d(x_{n+1},Tx_n) + d(x_n,Tx_{n+1})}{2} = \lambda \frac{d(x_{n+1},x_{n+1}) + d(x_n,x_{n+2})}{2} =
       \]
       \[\lambda \frac{ d(x_n,x_{n+2})}{2}.
       \]
Further, the conclusion reached by the authors for
this case, that the distances $d(x_{n+1},x_n)$
are bounded above by an expression that tends
to 0 as $n \to \infty$, depends on the unproven
hypothesis that an analog of 
this case holds for all indices $i < n$.
\end{itemize}

Thus all three cases considered by the
authors are handled incorrectly. We
must conclude that the assertion of
Figure~\ref{fig:J-R-Zamf} is unproven.

\subsection{Fixed point for Rhoades contraction}
Figure~\ref{fig:J-R-RhoadesDef} shows the
definition appearing in~\cite{J-R18} of a
{\em Rhoades digital contraction}. The 
paper~\cite{J-R18} claims the fixed point result
shown in Figure~\ref{fig:J-R-RhoadesAssert}
for such functions. The argument offered in
``proof" of this assertion has errors that
are discussed below.

\begin{figure}
    \centering
    \includegraphics[height=1.25in]{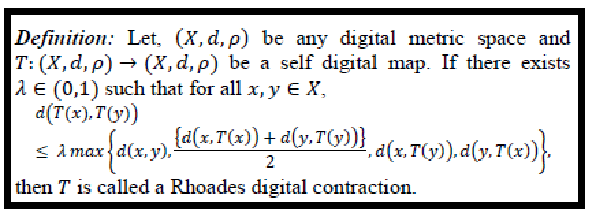}
    \caption{Definition of Rhoades digital contraction,~\cite{J-R18},
     p. 10769
          }
    \label{fig:J-R-RhoadesDef}
\end{figure}

\begin{figure}
    \centering
    \includegraphics[height=1in]{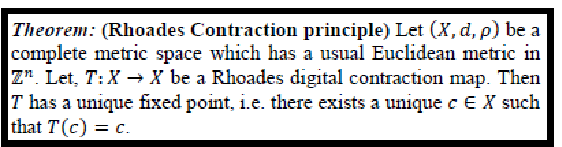}
    \caption{Fixed point assertion for Rhoades digital contraction,~\cite{J-R18},
     p. 10771
          }
    \label{fig:J-R-RhoadesAssert}
\end{figure}

For convenience, let
\[ M(x,y) = \max \left \{
    d(x,y),~\frac{d(x,Tx) + d(y,Ty)}{2},
       ~d(x,Ty),~d(y,Tx)
\right \}.
\]

The authors' argument considers cases corresponding
to which of the embraced expressions above gives
the value of $M(x_{n+1},x_n)$. In each case, the 
authors assume without proof that the same
case is valid for $M(x_{i+1},x_i)$, for all
indices $i < n$.

Additional errors:
\begin{itemize}
    \item In case 2, the inequality
          \[ d(Tx_n, Tx_{n-1}) \le 
             \lambda \frac{d(x_n,x_{n-1})+d(x_{n-1},x_{n-2})}{2}
          \]
          should be, according to Figure~\ref{fig:J-R-RhoadesDef},
          \[ d(x_n,x_{n+1}) = d(Tx_n, Tx_{n-1}) \le 
          \lambda \frac{d(x_n,Tx_n)+d(x_{n+1},Tx_{n-1})}{2}
          \]
          \[
             = \lambda \frac{d(x_n,x_{n+1})+d(x_{n+1},x_n)}{2}
             = \lambda d(x_n,x_{n+1}).
          \]
    Note this implies 
    \begin{equation}
    \label{eventualConst}
    x_n = x_{n+1},
    \end{equation}
    which would
    imply the existence of a fixed point.
    Also, the authors claim that
    \[ \lambda \frac{d(x_n,x_{n-1})+d(x_{n-1},x_{n-2})}{2} \le
     \lambda d(x_n,x_{n-1}),
    \]
    which is equivalent to
    \[ d(x_{n-1},x_{n-2}) \le d(x_n,x_{n-1}).
    \]
    No reason is given in support of the latter; further, it
    undermines the later claim that $\{x_n\}$ is a Cauchy
    sequence, since the authors did not 
    deduce~(\ref{eventualConst}).
    \item In case 3, it is claimed that
          \[ \lambda d(x_n,Tx_{n-1}) \le
             \lambda d(x_n,x_{n-1}).
          \]
          This should be corrected to
          \[ \lambda d(x_n,Tx_{n-1}) =
             \lambda d(x_n,x_n) = 0,
          \]
          which would guarantee a fixed point.
    \item In case 4, we see the claim
          \[ d(x_{n-1},Tx_n) = d(x_{n-1},x_{n-1}) = 0.
          \]
          This should be corrected to
          \[ d(x_{n-1},Tx_n) = d(x_{n-1},x_{n+1}).
          \]
\end{itemize}

In view of these errors, we must regard the assertion shown
in Figure~\ref{fig:J-R-RhoadesAssert} as unproven.

\section{Assertions for weakly compatible maps in~\cite{BarveEtAl}}
In this section, we show that
the assertions of~\cite{BarveEtAl} are
trivial or incorrect.

\subsection{Theorem~3.1 of~\cite{BarveEtAl}}
\begin{definition}
\label{WeaklyCompatDef}
{\rm \cite{Dalal17}}
Let $S,T: X \to X$. Then $S$ and $T$ are
{\em weakly compatible} or 
{\em coincidentally commuting} if, for every 
$x \in X$ such that $S(x)=T(x)$ we have 
$S(T(x)) = T(S(x))$.
\end{definition}

The assertion stated as Theorem~3.1
in~\cite{BarveEtAl} is shown in
Figure~\ref{fig:BarveTheorem3.1}. 
In this section, we show the assertion is 
false except in a trivial case.

Note if $d$ is any $\ell_p$ metric (including the usual 
Euclidean metric) then the requirements of
closed subsets of $X$ are automatically satisfied, since $(X,d)$ 
is a discrete space.

\begin{thm}
\label{noBarve}
If functions $G,H,P,Q$ satisfy the hypotheses of
Theorem 3.1 of~{\rm \cite{BarveEtAl}} then 
$G=H=P=Q$. 
Therefore, each of the pairs
$(P,G)$ and $(Q,H)$ has a unique
common point of coincidence 
if and only if $N$ consists of a single point.
\end{thm}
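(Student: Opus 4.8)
The plan is to argue that the hypotheses of Theorem 3.1 of \cite{BarveEtAl} force a contractive-type inequality to hold for \emph{all} pairs of points, and then exploit the fact that for a digital metric space with an $\ell_p$ metric (or any finite $X$) the space $(X,d)$ is uniformly discrete. Concretely, I would first extract from the statement in Figure~\ref{fig:BarveTheorem3.1} the governing inequality relating $d(Px,Qy)$ (or the appropriate pair among $G,H,P,Q$) to a contraction constant $k<1$ times a maximum of distances involving $G,H,P,Q$ applied to $x,y$. The inclusion conditions among the ranges of $P,Q,G,H$ that appear in such theorems (typically $P(N)\subseteq H(N)$ and $Q(N)\subseteq G(N)$, or similar) let one build, from an arbitrary $x_0$, an ``iteration'' sequence in the usual Jungck style.

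Next I would run the standard Jungck-type computation: the sequence $\{y_n\}$ defined by alternately applying $P,Q$ and matching values through $G,H$ satisfies $d(y_{n+1},y_{n+2}) \le k\, d(y_n,y_{n+1})$, hence $d(y_n,y_{n+1}) \le k^n d(y_0,y_1) \to 0$. Because $(X,d)$ is uniformly discrete --- there is $\varepsilon>0$ with $d(a,b)<\varepsilon \Rightarrow a=b$ --- the inequality $d(y_n,y_{n+1}) < \varepsilon$ for large $n$ forces the sequence to be eventually constant, so it converges to some $z\in N$ and in fact stabilizes at $z$. Tracking what stabilization means at the level of $P,Q,G,H$ then yields a coincidence point that is shared: $Pz=Gz$ and $Qz=Hz$ at the limiting index, and weak compatibility (Definition~\ref{WeaklyCompatDef}) upgrades this to a common fixed point $w$ with $Pw=Qw=Gw=Hw=w$.

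The heart of the matter, and the step I expect to be the main obstacle, is showing that the contractive inequality together with uniform discreteness does not merely produce \emph{a} common fixed point but forces the four functions to be \emph{globally equal}. For this I would take two arbitrary points $a,b\in N$ and feed them into the governing inequality. The key observation is that on a uniformly discrete space every nonzero distance is bounded below by $\varepsilon$, while $k<1$; so an inequality of the form $d(Pa,Qb)\le k\,\max\{\dots\}$ cannot be satisfied with a strictly positive right-hand side unless the left-hand side is also forced small --- iterating this, or applying it to the already-found fixed point $w$ in place of one of the arguments, collapses all the distances $d(Ga,w)$, $d(Ha,w)$, $d(Pa,w)$, $d(Qa,w)$ to zero for every $a$. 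Hence $Ga=Ha=Pa=Qa=w$ for all $a\in N$, i.e.\ $G=H=P=Q$ and each is the constant map with value $w$.

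Finally, the ``therefore'' clause is a one-line deduction: if $G=H=P=Q$ is a constant map, then the pairs $(P,G)$ and $(Q,H)$ have a unique common point of coincidence precisely when that constant is the only point available, i.e.\ when $N$ is a singleton; if $N$ has more than one point, the common point of coincidence is still unique (it is $w$) but coincidence points are all of $N$, and more to the point the ``theorem'' has content only in the degenerate one-point case, which is the triviality we set out to expose. I would present the global-equality argument carefully (it is the nontrivial part), and merely remark that the uniform discreteness hypothesis is automatic under the $\ell_p$-metric or finiteness assumptions highlighted just before the theorem statement, so the conclusion applies to every setting the authors of \cite{BarveEtAl} actually consider.
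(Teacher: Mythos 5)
There is a genuine gap, and it is exactly at the step you yourself flag as the heart of the matter. You have guessed that the governing hypothesis of Theorem 3.1 of \cite{BarveEtAl} is a standard Jungck-type contraction $d(Px,Qy)\le k\,\max\{\dots\}$ with $k<1$, and you then try to leverage uniform discreteness to force $G=H=P=Q$ globally. Under that assumed hypothesis the conclusion is simply false, so no amount of care can close the argument: take $N=\{0,1\}\subset\Z$ with the Euclidean metric, $G=H=\mathrm{id}$, and $P=Q$ the constant map $0$; then $d(Px,Qy)=0$ for all $x,y$, every contractive inequality is satisfied, the pairs are weakly compatible, yet $P\neq G$. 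The flaw in your collapse argument is that uniform discreteness bounds nonzero distances \emph{below} by $\varepsilon$ but does not bound the right-hand maximum \emph{above} by anything comparable, so $d(Pa,Qb)\le k\,\max\{\dots\}$ never forces $d(Pa,Qb)=0$ when the maximum is large; iterating, or substituting the common fixed point $w$ for one argument, only reproduces the usual uniqueness-of-fixed-point conclusion, not equality of the four maps. Your Jungck iteration, the appeal to eventual constancy, and the use of weak compatibility are therefore machinery aimed at the wrong target.

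The actual reason the theorem collapses is a peculiarity of the printed hypothesis in Figure~\ref{fig:BarveTheorem3.1}: after simplification it reads
\begin{equation*}
\Psi\bigl(d(Px,Qy)\bigr)\;\le\;-\,\tfrac{1}{2}\,\Psi\bigl(d_{G,H}(x,y)\bigr),
\end{equation*}
where $\Psi$ is non-negative and vanishes only at $0$, and $d_{G,H}(x,y)$ is the maximum of $d(G(x),H(y))$, $d(G(x),P(x))$, $d(H(y),Q(y))$, and $\tfrac12[d(G(x),Q(y))+d(H(y),P(x))]$. Since the left side is $\ge 0$ and the right side is $\le 0$, both are zero for \emph{all} $x,y\in N$; hence $P(x)=Q(y)$ for all $x,y$ and $d_{G,H}(x,y)=0$, which gives $G(x)=P(x)=Q(x)=H(x)$ for every $x$ at once. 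No iteration, no uniform discreteness, and no weak compatibility are needed, and the ``therefore'' clause about uniqueness of the (common) coincidence points versus $N$ being a singleton then follows as you describe. So your final paragraph is fine in spirit, but the global-equality step must come from the sign quirk of the stated inequality, not from a contraction-plus-discreteness argument, which provably cannot deliver it.
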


\begin{proof}
We observe the following.

\begin{figure}
    \centering
    \includegraphics[height=2.25in]{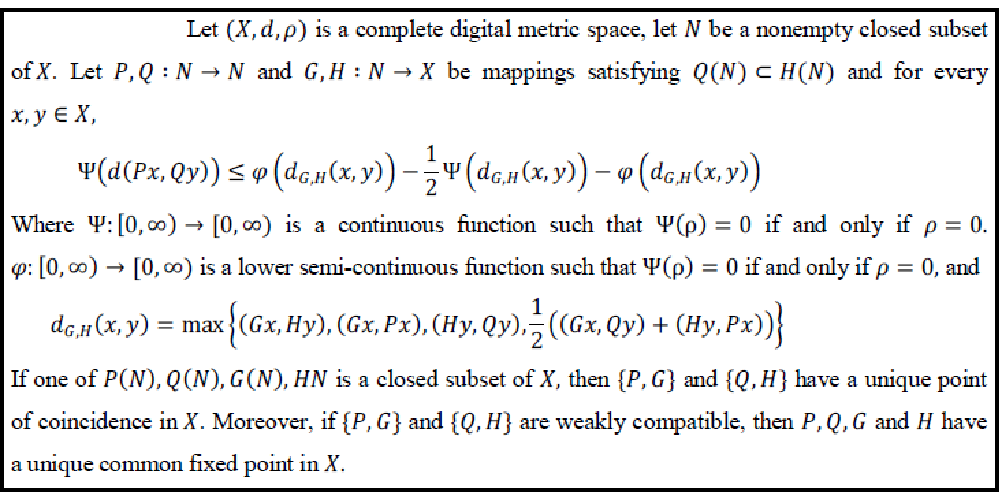}
    \caption{The assertion stated as
    Theorem 3.1 of~\cite{BarveEtAl}
          }
    \label{fig:BarveTheorem3.1}
\end{figure}

\begin{itemize}
    \item The inequality in the assertion simplifies as

 \[ \Psi(d(Px, Qy)) \le - \,\frac{1}{2}\Psi(d_{G,H}(x,y)).
    \]
    Since the function $\Psi$ is non-negative,
we have
\begin{equation}
\label{PsiZeroes}
    \Psi(d(Px, Qy)) = \Psi(d_{G,H}(x,y)) = 0.
\end{equation}
Therefore, we have
\begin{equation}
\label{PandQ}
     P(x) = Q(y) \mbox{ for all } x,y \in N.
\end{equation}
    \item In the equation for $d_{G,H}$ in
          Figure~\ref{fig:BarveTheorem3.1}, the pairs of
          points listed on the right side should be
          understood as having $d$ applied, i.e.,
          \begin{equation}
              \label{dGH}
           d_{G,H}(x,y) = \max \left \{
             \begin{array}{c}
             d(G(x),H(y)), d(G(x),P(x)), d(H(y),Q(y)), \\
             \frac{1}{2}[d(G(x),Q(y)) + d(H(y),P(x))]
             \end{array}
           \right \}.
          \end{equation}
Since $\Psi(x)=0$ if and only if $x=0$, we have
from~(\ref{PsiZeroes}) that
$d_{G,H}(x,y) = 0$, so (\ref{PandQ}) 
and (\ref{dGH}) imply
\[ G(x)=P(x)=Q(x)=H(x) \mbox{ for all } x \in N.
\]
\end{itemize}
We conclude that
$(P,G)$ and $(Q,H)$ are pairs of functions whose respective
common points of coincidence are unique 
if and only if $N$ consists of a single point.
\end{proof}

\subsection{Example~3.1 of~\cite{BarveEtAl}}
In Figure~\ref{fig:BarveExl3.1}, we see the assertion
presented as Example~3.1 of~\cite{BarveEtAl}.

\begin{figure}
    \centering
    \includegraphics[height=1.9in]{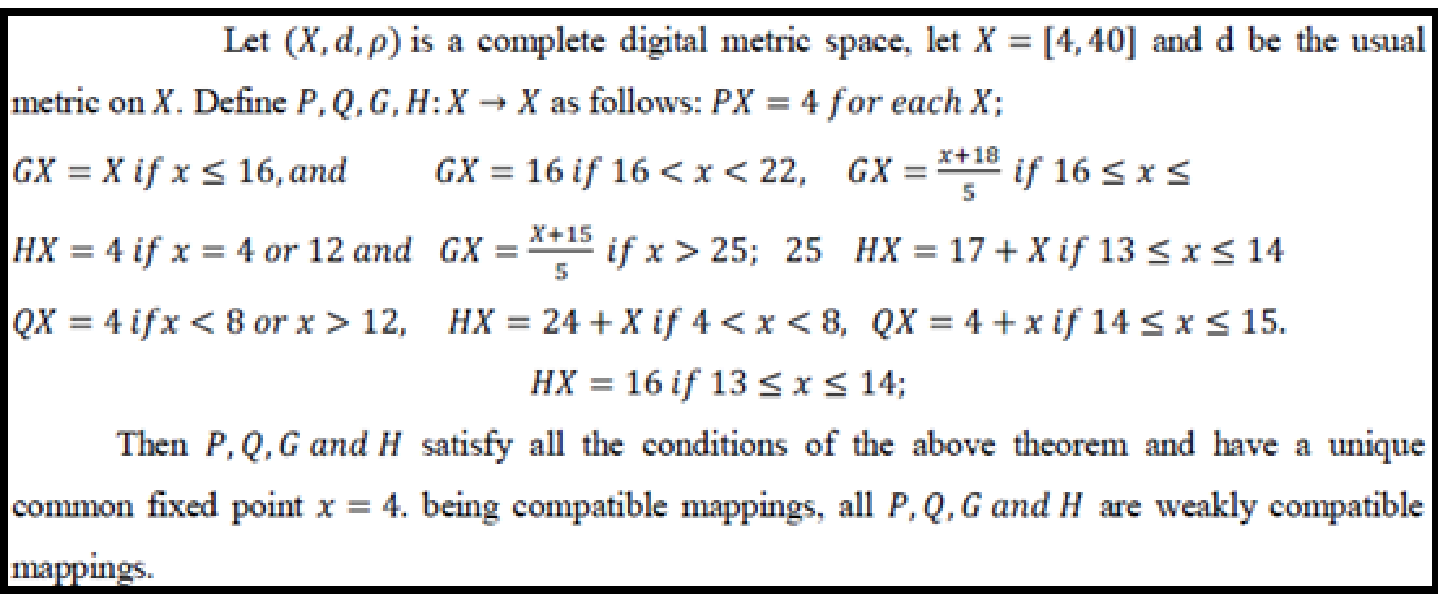}
    \caption{The assertion stated as
    Example~3.1 of~\cite{BarveEtAl}
          }
    \label{fig:BarveExl3.1}
\end{figure}

Note the following.
\begin{itemize}
    \item If ``$X=[4,40]$" is meant to mean the real interval
          from 4 to 40, then $X$ is not a digital image,
          as all coordinates of members of a digital image
          must be integers. Perhaps $X$ is meant to be the
          digital interval $[4,40]_{\Z} = [4,40] \cap \Z$.
    \item The function $G$ is not defined on all 
          of $[4,40]_{\Z}$, appears to be
          doubly defined for some values of $x$, 
          (notice the incomplete inequality at 
          the end of the 3rd line) and is not 
          restricted to integer values.
    \item The function $H$ is not defined on 
    all of $[4,40]_{\Z}$ and is doubly 
          defined for           $x \in \{13,14\}$.
    \item The function $Q$ is not defined for
          $x \in \{9,10,11,12\}$, and is doubly defined for $x \in \{14, 15\}$.
    \item The ``above theorem" referenced in the last sentence
          of Figure~\ref{fig:BarveExl3.1} is the assertion
          discredited by our Theorem~\ref{noBarve}, which shows
          that $P=Q=G=H$. Clearly, the assertion shown
          in Figure~\ref{fig:BarveExl3.1} fails to satisfy
          the latter.
\end{itemize}
Thus, Example~3.1 of~\cite{BarveEtAl} is not useful.

\subsection{Corollary 3.2 of~\cite{BarveEtAl}}
The assertion presented as Corollary~3.2 of~\cite{BarveEtAl}
is presented in Figure~\ref{fig:BarveCor3.2}. Notice that ``weakly mappings" is an 
undefined phrase. No proof is given
in~\cite{BarveEtAl} for this assertion, 
and it is not clear how the assertion 
might follow from previous assertions of 
the paper (which, as we have 
seen above, are also flawed).

Perhaps ``weakly mappings"
is intended to be ``weakly compatible 
mappings". At any rate, by labeling this 
assertion as a Corollary, the
authors suggest that it follows from the paper's
flawed ``Theorem"~3.1.

The assertion presented as ``Corollary"~3.2
of~\cite{BarveEtAl} must be regarded as undefined
and unproven.

    \begin{figure}
    \centering
    \includegraphics[height=0.75in]{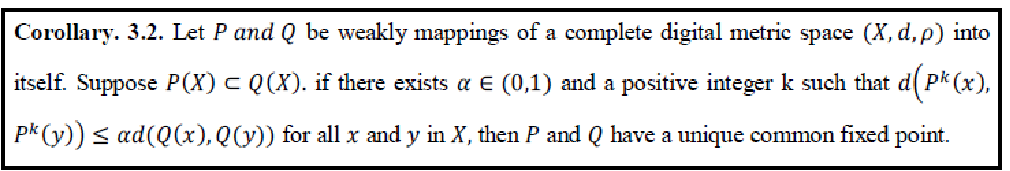}
    \caption{The assertion stated as
    Corollary~3.2 of~\cite{BarveEtAl}
          }
    \label{fig:BarveCor3.2}
\end{figure}

\section{Assertion for coincidence and fixed points
         in~\cite{MishraEtAl19}}
\begin{figure}
    \centering
    \includegraphics[width=4.5in]{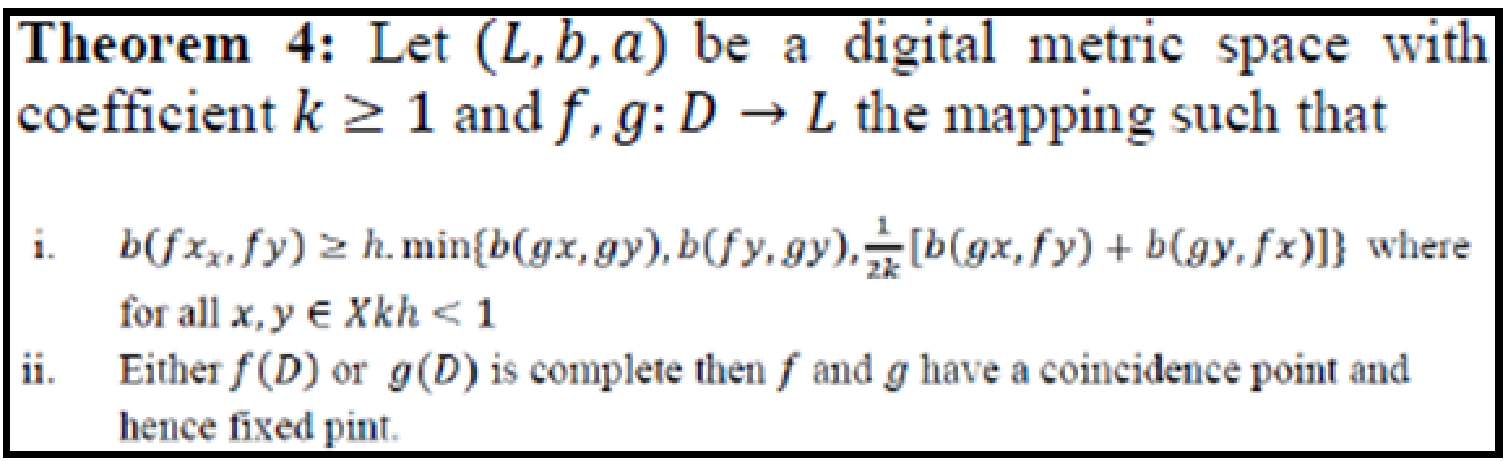}
    \caption{The assertion presented as Theorem 4
             of~\cite{MishraEtAl19}}
    \label{fig:MishraEtAl19}
\end{figure}
Figure~\ref{fig:MishraEtAl19} shows the assertion presented
as Theorem~4 of~\cite{MishraEtAl19}. The assertion as stated
is false. Flaws in this assertion include:
\begin{itemize}
    \item ``$D$" apparently should be ``$L$", and ``$x_x$"
          apparently should be ``$x$".
    \item No restriction is stated for the value of $h$. Therefore,
          we can take $h=0$, leaving the inequality in i) as
          $b(fx,fy) \ge 0$; since $b$ is a metric, this 
          inequality is not a restriction. Thus $f$ and $g$
          are arbitrary; they need not have a coincidence point
          or fixed points.
\end{itemize}

\section{Assertion for Zamfirescu contractions in~\cite{RanaGarg}}
\label{RanaGargSec}
Let $f: X \to X$, where
$(X,d,\kappa)$ is a digital metric space.
Recall that a {\em Zamfirescu digital
contraction}~\cite{MishraEtAl} is defined
in Figure~\ref{fig:MishraDef3.2}.

We show the assertion presented as Theorem~4.1
of~\cite{RanaGarg} in
Figure~\ref{fig:RanaThm4.1}.
    
        \begin{figure}
    \centering
    \includegraphics[height=1.6in]{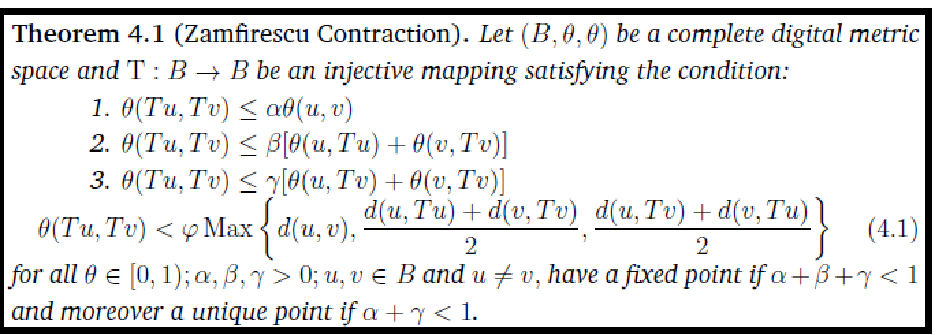}
    \caption{The assertion presented as Theorem~4.1 of~\cite{RanaGarg}
          }
    \label{fig:RanaThm4.1}
\end{figure}

Observe:
\begin{itemize}
    \item The symbol $\theta$ has distinct uses
          in this assertion. In the first line,
          $\theta$ is introduced as both the
          metric and the adjacency of $X$. Since
          our discussion below does not use
          an adjacency, we will assume $\theta$
          is the metric $d$.
    \item The symbol $\varphi$ seems intended to
          be a real number satisfying some
          restriction, but no restriction is
          stated. Alternately, it may be that
          $\varphi$ is intended to be a
          function to be applied to the
          $Max$ value in the statement, but
          no description of such a function
          appears.
\end{itemize}

Perhaps most important, we have the following.

\begin{thm}
If $Y$ has more than one point and
$d$ is any $\ell_p$ metric, then no
function $T$ satisfies the
hypotheses shown in Figure~\ref{fig:RanaThm4.1}.
\end{thm}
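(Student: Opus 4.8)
The plan rests on one structural fact: $(\Z^{n},\ell_p)$ is uniformly discrete, so any two distinct points of a digital image $Y$ carrying an $\ell_p$ metric are at distance at least $1$ (since $d_p\ge d_\infty$, which is an integer). I would first dispose of the undefined parameter $\varphi$. Because no restriction on $\varphi$ is stated, the assertion can be non-vacuously true only under the intended reading $0<\varphi<1$: if $\varphi\le 0$ the displayed inequality forces $d(Tx,Ty)\le\varphi\cdot\mathrm{Max}\{\ldots\}\le 0$ for all $x,y$, hence $T$ is constant, and if moreover $\varphi<0$ then, choosing $x\ne y$ (possible since $|Y|>1$, and $\mathrm{Max}\{\ldots\}>0$ for distinct $x,y$ in any reasonable reading of the definition), the right-hand side is negative, which is impossible; whereas if $\varphi\ge 1$ the inequality is no restriction on, e.g., the identity, which has fixed points, so the assertion would be false rather than true. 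So I would proceed under $0<\varphi<1$.

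The heart of the argument is the standard Zamfirescu iteration. Fix $x_0\in Y$, put $x_{n+1}=Tx_n$ and $a_n=d(x_n,x_{n+1})$. Applying the displayed inequality at $(x_{n-1},x_n)$ and bounding the $\mathrm{Max}$ of Figure~\ref{fig:MishraDef3.2} by the triangle inequality — the same sort of computation carried out above for the analogous assertion of~\cite{J-R18} leading to~(\ref{Y-ineq}) — gives $a_n\le\varphi\, a_{n-1}$, hence $a_n\le\varphi^{\,n}a_0\to 0$; since each $a_n$ is either $0$ or $\ge 1$, this forces $a_n=0$ for all large $n$, so $T$ has a fixed point. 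Running the same iteration from an arbitrary starting point shows every $T$-orbit is eventually constant at a fixed point, and two distinct fixed points $p\ne q$ are impossible, because the displayed inequality at $(p,q)$ would give $d(p,q)=d(Tp,Tq)\le\varphi\max\{d(p,q),0,d(p,q)\}=\varphi\, d(p,q)<d(p,q)$. Thus $T$ must have a unique fixed point $p$, with $T^{N}(y)=p$ for some $N=N(y)$ and every $y\in Y$.

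The final step is to contradict the remaining requirements displayed in Figure~\ref{fig:RanaThm4.1}. A self-map of a multi-point $Y$ all of whose orbits are eventually the single point $p$ cannot meet those requirements: for instance, if $T$ is required onto $Y$ (whence $T$ is bijective when $Y$ is finite), then pulling back through $T$ the pair realizing $D=\max_{Y\times Y}d$ gives $D=d(Tx,Ty)\le\varphi\,\mathrm{Max}\{\ldots\}\le\varphi D$, forcing $D=0$ and $|Y|=1$; alternatively a strict instance of the displayed inequality at coincident arguments would force $T$ to be fixed-point-free, contradicting the previous paragraph. Either route closes the proof. I expect the real obstacle to be precisely this last step — pinning down which of the ambiguously written conditions of Figure~\ref{fig:RanaThm4.1} is actually in force, and reading its $\mathrm{Max}$ consistently with Figure~\ref{fig:MishraDef3.2} — since the iteration and the uniform-discreteness input are routine.
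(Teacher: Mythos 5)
There is a genuine gap, and it sits exactly where you predicted: your argument never actually contradicts a hypothesis of Figure~\ref{fig:RanaThm4.1}. Your iteration (granting the reading $0<\varphi<1$ and the Zamfirescu-type bound) only yields that $T$ has a unique fixed point and that every orbit is eventually constant at it. That conclusion is not self-contradictory --- constant maps on a multi-point $Y$ have exactly this behaviour --- so everything hinges on which further requirement of the figure such a $T$ would violate. You guess at surjectivity or at a ``strict inequality at coincident arguments,'' but neither is the operative hypothesis: the statement in Figure~\ref{fig:RanaThm4.1} requires $T$ to be \emph{injective}, and that is the condition the paper's proof contradicts. Since you never invoke injectivity, no contradiction is actually derived, and the theorem is not proved by your route. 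A secondary weakness is your treatment of $\varphi$: you concede that for $\varphi\ge 1$ the assertion ``would be false,'' so your argument is conditional on an interpretation, whereas the paper's proof does not need any restriction on $\varphi$.

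The paper's own argument is far shorter and avoids the iteration entirely. Because $d$ is an $\ell_p$ metric on a subset of $\Z^n$, there exist $u_0\neq v_0$ realizing the minimum positive distance $\min\{d(x,y):x\neq y\}$. The inequality in item~1 of Figure~\ref{fig:RanaThm4.1} then forces $d(Tu_0,Tv_0)$ below that minimum, hence $d(Tu_0,Tv_0)=0$, i.e.\ $Tu_0=Tv_0$, contradicting the hypothesis that $T$ is injective. Your uniform-discreteness observation is the right ingredient, but it should be applied once, to a closest pair of distinct points against injectivity, rather than through an orbit analysis whose conclusion contradicts nothing by itself.
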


\begin{proof}
Suppose there is such a function $T$.
By choice of $d$, there exist $u_0,v_0 \in X$ such that
\[ d(u_0,v_0) = \min\{d(x,y)~|~x,y \in X,
      x \neq y\}.
\]
By the inequality stated in item~1
of Figure~\ref{fig:RanaThm4.1},
$d(Tu_0,Tv_0) = 0$. This contradicts the
hypothesis that $T$ is injective.
\end{proof}

\section{Assertion for expansive map in~\cite{ChauhanEtAl}}
The paper~\cite{ChauhanEtAl} claims to have
a fixed point theorem for digital metric
spaces. However, it is not
clear what the authors intend to assert,
as the paper has many undefined
and unreferenced terms and many obscuring
typographical errors.

The assertion stated as ``Preposition"~2.7
of~\cite{ChauhanEtAl} (and also as an
unlabeled Proposition on p. 10769 
of~\cite{J-R18}) was shown in Example~4.1
of~\cite{BxSt19} to be false.

\begin{figure}
    \centering
    \includegraphics[height=2in]{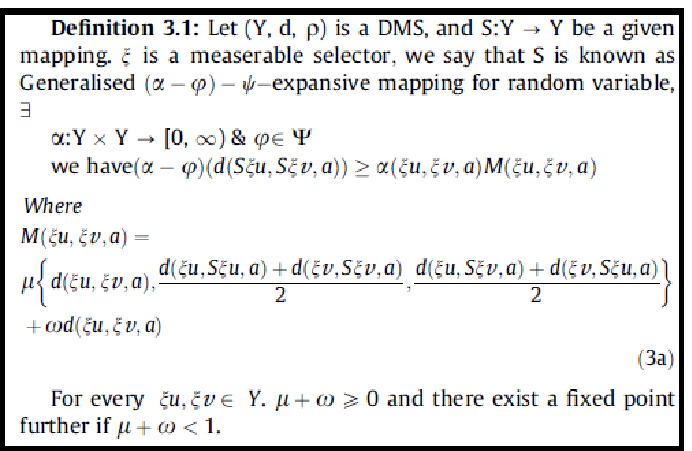}
    \caption{The statement presented as
    Definition~3.1 of~\cite{ChauhanEtAl}
          }
    \label{fig:ChauhanDef3.1}
\end{figure}

The definition of what this paper calls a
{\em Generalized} ($\alpha~-~\phi$)~-~$\psi$
{\em expansive mapping for random variable}
is shown in Figure~\ref{fig:ChauhanDef3.1}.
We observe the following.
\begin{itemize}
    \item This is not the same as a
          $\beta~-~\psi~-~\phi$ 
          {\em expansive mapping} defined
          in~\cite{Jyoti-Rani18}.
    \item Notice the $\rho$ that appears
          intended to be the adjacency of
          the digital metric space. This is
          significant in our discussion later.
    \item The set $\Psi$ is not defined anywhere
          in the paper. Perhaps it is meant
          to be the set $\Psi$ 
          of~\cite{Jyoti-Rani18}.
    \item The functions $d$, $\alpha$, and
          $M$ all have a third parameter $a$
          that appears to be extraneous, since
          each of these functions is used later
          with only two parameters.
    \item One supposes $\mu$ and $\omega$ must be
          non-negative, but this is not stated.
\end{itemize}

The assertion presented as Theorem~3.3
of~\cite{ChauhanEtAl} is shown in
Figure~\ref{fig:ChauhanThm3.3}.

\begin{figure}
    \centering
    \includegraphics[height=1.6in]{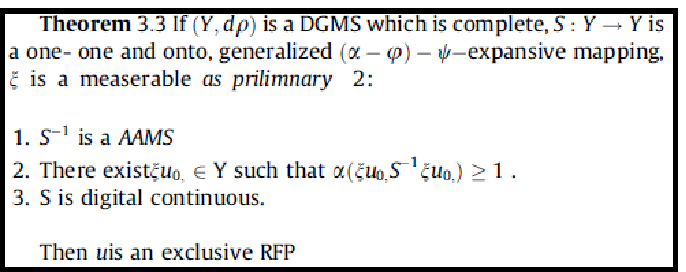}
    \caption{The assertion presented as
    Theorem~3.3 of~\cite{ChauhanEtAl}
          }
    \label{fig:ChauhanThm3.3}
\end{figure}

In the statement of this assertion:
\begin{itemize}
    \item ``DGMS" appears, although it is not
          defined anywhere in the paper. Perhaps
          it represents ``digital metric space".
    \item The term ``exclusive RFP" is not
          defined anywhere in the paper.
          One supposes the ``FP" is for
          ``fixed point".
\end{itemize}

In the argument offered as ``Verification"
of this assumption, we note the following.
\begin{itemize}
    \item The second line of the verification
          contains an undefined operator,
          ~\put(1,1){\shortstack{$+$ \\ $n$}}~
           \newline which perhaps is meant 
           to be $+$.
    \item The same line contains part of the
          phrase ``$u_n$ is a unique point 
          of $S$." What the authors intend by this
          is unclear.
    \item At the start of the long 
          statement~(3e), it is claimed that
          $M(\xi u_n, \xi u_{n+1})$ is the
          maximum of three expressions.
          The second term of the expression 
          for $M(\xi u_n, \xi u_{n+1})$ applies
          $\rho$ to a numeric expression. This makes no sense, 
          since $\rho$ is the adjacency of $Y$
          (see Figure~\ref{fig:ChauhanDef3.1}).
          Notice also that
          Figure~\ref{fig:ChauhanDef3.1}
          shows no such term in its expression
          for the function $M$. The use of 
          $\rho$, as a numeric value that has 
          neither been defined nor restricted
          to some range of values,
          propagates through both of
          the cases considered.
    \item In the expression for
          $M(\xi u_n, \xi u_{n+1})$,
          the third term, 
          $\omega(\xi u_n, \xi u_{n-1})$,
          should be
          $\omega d(\xi u_n, \xi u_{n-1})$ according
          to Figure~\ref{fig:ChauhanDef3.1}.
          This error repeats several times in
          statement (3e).
\end{itemize}

Other errors are present, but we have
established enough to conclude that whatever
the authors were trying to prove is unproven.

\section{Further remarks}
We have shown that nearly every assertion
introduced in the 
papers~\cite{J-R18,BarveEtAl,MishraEtAl19,RanaGarg,ChauhanEtAl}
is incorrect, unproven due to errors in the ``proofs,"
or trivial. These papers are
part of a larger body of highly flawed
publications devoted to fixed point assertions
in digital metric spaces, and emphasize our
contention that the digital metric space is
not a worthy subject of study.

\end{document}